\title{The Order of the Non-universal Tree of a Hilbert Space with Respect to the Haar Basis}
\author{Sam Whitmire
	 \thanks{Indiana University Bloomington Department of Mathematics, Bloomington, IN 47405}
                  }
\documentclass{article}
\usepackage{graphicx}
\usepackage{amssymb}
\usepackage{amsmath}
\usepackage{mathtools}
\usepackage{amsthm}
\usepackage{geometry}
\usepackage{biblatex}
\newtheorem{theorem}{Theorem}
\newtheorem{lemma}[theorem]{Lemma}
\newtheorem{proposition}[theorem]{Proposition}
\newtheorem{definition}[theorem]{Definition}
\newtheorem{corollary}[theorem]{Corollary}

\newcommand{\spn}{\text{span}}
\newcommand{\N}{\mathbb N}
\newcommand{\R}{\mathbb R}
\newcommand{\C}{\mathbb C}
\newcommand{\floor}[1]{\lfloor #1 \rfloor}
\DeclarePairedDelimiter{\ceil}{\lceil}{\rceil}
\begin{document}
\newpage
\maketitle
%%%%%%%%%%%%%%%%%%%%%%%%%%%
% abstract, keywords and Subject classification are optional.
%%%%%%%%%%%%%%%%%%%%%%%%%%%
\begin{abstract}
In \cite{Bo}, Bossard introduced the notion of the non-universal tree $T_{NU}(X)$ associated with each separable Banach space $X$ which does not contain an isomorphic copy of $C(2^\N)$. Together with the order operation defined on well-founded trees, we obtain a method of classifying the complexity of separable Banach spaces by the degree of isomorphism of finite-dimensional subspaces of $C(2^\N)$. Despite further refinement of this concept in later years, we are unaware of any specific instances of direct computations of the order of the non-universal tree for a concrete space and basis. We show that if $H$ is any separable Hilbert space, then $o(T_{NU}(H)) = \omega + 1$ when taken with respect to the Haar basis for $C(2^\N)$, demonstrating that the class of Hilbert spaces is the least complex class with respect to this measurement when considering this basis.
\end{abstract}

% Most people don't use these, so they are "commented out"
% by starting the lines with a "%"
%\begin{keywords}
%   \LaTeX, typesetting
%\end{keywords}

%\begin{AMS}
%   50C60, 18C25
%\end{AMS}

%%%%%%%%%%%%%%%%%%%%%%
% % Here is the start of the Text
%%%%%%%%%%%%%%%%%%%%%%
\section{Introduction}
The conceptualization of the construction of the NU tree for a space $X$ can be traced back to Bourgain in \cite{Bu}. In \cite{Bo}, Bossard formalized the development and demonstrated that $X\mapsto o(T_{NC}(X))$ is a $\boldsymbol{\Pi}_1^1$-rank on $NU$, the collection of all separable Banach spaces which do not contain an isomorphic copy of $C(2^\N)$. At each step of the construction, one asks for an isomorphic equivalence between the spaces spanned by finite sequences in $X$ and finite lists of elements of a Schauder basis of $C(2^\N)$; hence this order operation measures the degree to which $X$ contains isomorphic finite-dimensional subspaces of $C(2^\N)$ despite not containing an isomorphic copy of $C(2^\N)$ itself.

Much interest has been expressed regarding this mapping. For example, in \cite{AD}, the question of dependence of $o(T_{NU}(X))$ on the choice of basis used in the construction was tackled, and they were able to verify that there is a $\boldsymbol\Pi_1^1$ rank on NU which uniformly bounds $o(T_{NU}(X,(e_n)))$ for each normalized Schauder basis $(e_n)$ of $C(2^\N)$. Even so, to our knowledge, there are no concrete examples of actual computations of $o(T_{NU}(X,(e_n)))$ for any space $X$ and any basis $(e_n)$ of $C(2^\N)$. This paper aims to fill that gap with the following result:

\begin{theorem}\label{MR}
Let $H$ be a separable Hilbert space and $(f_{t_n})_{n=0}^\infty$ be the Haar basis for $C(2^\N)$. Then $\\o(T_{NC}(X,(f_{t_n}))) = \omega + 1$.
\end{theorem}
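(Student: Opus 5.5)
The plan is to compute the rank function of the tree $T=T_{NC}(H,(f_{t_n}))$ node by node. We show that every nonempty node has finite rank, and that these finite ranks are unbounded among the immediate successors of the root. Then the root has rank exactly $\omega$, so $o(T)=\omega+1$. We use Bossard's convention, in which a node is a finite sequence $(x_0,\dots,x_n)$ from a fixed countable dense subset of $H$, and the constant $K$ of equivalence to $(f_{t_0},\dots,f_{t_n})$ is fixed along each branch by the first entry (equivalently, the tree is the disjoint union over $K\in\N$ of the trees with constant $K$). I have not verified that this matches the paper's exact definition. If the constant is instead fixed globally, the order would be finite, so the indexing by $K$ is essential to getting $\omega+1$. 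The proof then has two halves: an upper bound on the length of branches with a given constant, and a lower bound showing that every length is attained for some constant.

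\textbf{Upper bound.} Order the Haar system by levels: $f_{t_0}=1$, followed by the functions $1_{[s0]}-1_{[s1]}$ with $|s|=j$, for $j=0,1,2,\dots$. Suppose $2^k\le n+1<2^{k+1}$. Then $\spn\{f_{t_0},\dots,f_{t_n}\}$ contains all the step functions constant on the cylinders of level $k$, because the Haar functions of levels $<k$ together with $f_{t_0}$ span exactly this space. The indicators of the $2^k$ level-$k$ cylinders span an isometric copy of $\ell_\infty^{2^k}$. Hence if $(x_i)_{i\le n}$ is $K$-equivalent to $(f_{t_i})_{i\le n}$, the corresponding combinations of the $x_i$ give a $K$-isomorphic copy of $\ell_\infty^{2^k}$ inside $H$. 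Every $2^k$-dimensional subspace of $H$ is isometric to $\ell_2^{2^k}$, and $d(\ell_\infty^m,\ell_2^m)=\sqrt m$. Therefore $2^{k/2}\le K$, which gives $n+1<2^{k+1}\le 2K^2$. So every branch with constant $K$ has length less than $2K^2$, every node below the root has finite rank, and $T$ is well-founded with $o(T)\le\omega+1$.

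\textbf{Lower bound.} Fix $n$ and let $E=\spn\{f_{t_0},\dots,f_{t_n}\}$. Since $\dim E=n+1$, John's theorem (or even a crude estimate) gives an isomorphism $S:E\to\ell_2^{n+1}\subset H$ with $\|S\|\|S^{-1}\|\le C(n)$ for some finite $C(n)$. Put $x_i=Sf_{t_i}$ and perturb each $x_i$ slightly to a point of the countable dense set. Because $(f_{t_i})_{i\le n}$ is a basis of the finite-dimensional space $E$, a small perturbation changes the equivalence constant by an arbitrarily small amount. Choosing $K\ge C(n)+1$, we obtain a branch of length $n+1$ in the $K$-component. Its initial segments are all in $T$, since the Haar system is monotone and initial segments of an equivalent sequence are equivalent with the same constant. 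Thus the $K$-component has rank at least $n$, and these ranks are unbounded in $n$, so the root has rank $\ge\omega$.

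Combining the two halves, the root has rank exactly $\omega$, giving $o(T)=\omega+1$. The main obstacle is bookkeeping rather than analysis. One must match the paper's precise definition of $T_{NC}$ (dense set, normalization, and how $K$ enters) and check the rank convention, i.e.\ that $o$ of a tree whose root has rank $\omega$ is $\omega+1$ rather than $\omega$. The analytic input is only the containment of $\ell_\infty^{2^k}$ in the span of the first $2^k$ Haar functions together with the distance $\sqrt m$ from $\ell_\infty^m$ to Hilbert space. Should the definition fix $K$ differently, I would first try to recover the $K$-indexing through the first coordinate of the node.
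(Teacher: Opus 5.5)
Your argument is correct and is essentially the paper's: for each fixed constant you bound the lengths of nodes using $d(\ell_2^m,\ell_\infty^m)=\sqrt m$, and for the lower bound you use a finite-dimensional isomorphism argument. The paper turns the length bound into the order computation through its ``infinite hand'' lemma, whereas you compute ranks directly.

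The one substantive difference is which $\ell_\infty$-subspace you use.

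\begin{itemize}
\item You take the whole span of the first $2^k$ basis vectors, i.e.\ the level-$k$ step functions. This is an isometric $\ell_\infty^{2^k}$. It holds equally for the paper's basis of indicators $\chi_{[t]}$, $t\in\mathcal O$, not only for the classical $\pm1$ Haar functions you describe.
\item The paper uses only the disjointly supported $f_{t_{2^k-1}}$, which give an $\ell_\infty$ of dimension $\lfloor\log_2(i+1)\rfloor$.
\end{itemize}

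Your choice therefore gives a polynomial rather than exponential length bound. Under the paper's two-sided convention for $K$-equivalence the distortion is $K^2$, so your inequality should read $2^{k/2}\le K^2$ rather than $2^{k/2}\le K$, and the bound becomes $n+1<2K^4$.
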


We also prove that $o(T_{NC}(X))\geq\omega + 1$ for any infinite-dimensional $X$, so this result indicates that Hilbert spaces lie at the absolute minimum of the complexity spectrum of Banach spaces which do not contain $C(2^\N)$ and for which comparison is made with the Haar basis. In certain cases, we are even able to prove direct estimates on the lengths of the vectors contained in the tree, and we derive these in the pentultimate section of the paper.

\section{Preliminaries}
In this section, we present a rapid review of the fundamental concepts required to discuss the construction of the tree in Theorem \ref{MR}. For a more detailed exposition, we invite the reader to consult \cite{K}. 

To construct the desired objects, we utilize a series of ideas united from functional analysis and descriptive set theory. By $2^\N$ we mean the set of all binary sequences under the metric 
\begin{equation*}
d(\alpha,\beta) = \begin{cases}
\frac{1}{n+1} & n \text{ is the least integer for which $\alpha(n)\neq\beta(n)$}\\
0 & \alpha = \beta
\end{cases}
\end{equation*}
It is well-known that $2^\N$ is a compact zero-dimensional Polish space under this metric. By $C(2^\N)$ we mean the set of all real-valued continuous functions on $2^\N$ equipped with the usual supremum norm
\begin{equation*}
||f||_\infty = \sup_{\alpha\in 2^\N} \{|f(\alpha)|\}
\end{equation*}
which makes $C(2^\N)$ into a Banach space. This space has a natural Schauder basis called the Haar basis which will stand at the center of our result. Let $2^{<\N}$ be the set of all finite binary sequences and let $\mathcal O\subseteq 2^{<\N}$ consist of the empty string together with all binary strings which end in 0. For $s,t\in \mathcal O$, define $s\cap t$ to be the longest string which is an initial segment of both $s$ and $t$. Also define $s \prec t$ if and only if $|s| < |t|$ or [$|s| = |t|$ and at the first index $i$ for which $s(i)\neq t(i)$, we have $s(i) = 0$ and $t(i) = 1$].

With a bit of effort, we have the following proposition (pp. 109-110 in \cite{D}):

\begin{proposition}
There is a unique bijection $h : \mathcal O\rightarrow\N$ which satisfies the following properties:
\begin{enumerate}
\item If $|s| < |t|$, then $h(s) < h(t)$.
\item If $|s|=|t|$ and $s \prec t$, then $h(s) < h(t)$.
\end{enumerate}
\end{proposition}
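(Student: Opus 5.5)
The plan is to show that the order $\prec$ on $\mathcal O$ is a strict total order in which every element has only finitely many predecessors. Then $h(s)$ can be taken to be the number of predecessors of $s$, and properties 1 and 2 just say that $h$ respects $\prec$. Note that $\prec$ as defined already encodes both conditions: strings are compared first by length, and strings of equal length lexicographically, with $0$ before $1$ at the first place of disagreement.

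For the first step, we check that $\prec$ is a strict total order. Irreflexivity is clear. For two distinct strings of equal length there is a first index where they differ, and exactly one of them has $0$ there, so exactly one of $s\prec t$ and $t\prec s$ holds. For strings of different lengths, the length comparison decides. Transitivity follows by cases on the lengths:
\begin{itemize}
\item if some length inequality in the chain is strict, the conclusion follows from the lengths;
\item if all three strings have the same length, this is transitivity of the lexicographic order, a routine check on the first differing indices.
\end{itemize}

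The second step is finiteness of initial segments. If $t\prec s$ then $|t|\le |s|$, and there are at most $\sum_{k\le |s|}2^k<\infty$ binary strings of length at most $|s|$. So each $s$ has finitely many predecessors in $\mathcal O$.

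Next we define $h(s)=\#\{t\in\mathcal O: t\prec s\}$ and verify three things.
\begin{itemize}
\item $h$ preserves order: if $s\prec t$, every predecessor of $s$ is a predecessor of $t$ by transitivity, and $s$ itself is one too, so $h(s)<h(t)$.
\item $h$ is injective, since $\prec$ is total and so distinct elements are comparable.
\item $h$ is surjective onto $\N$ (with $0\in\N$, matching the indexing $f_{t_0}$). For each $n$, $\mathcal O$ is infinite (it contains $0,00,000,\dots$), so there is an $s$ whose length exceeds the lengths of $n+1$ given elements, which gives $h(s)\ge n+1$. The predecessors of $s$ are then totally ordered and finite, and the $(n+1)$-st of them, call it $u$, has exactly $n$ predecessors. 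This uses that $\{t:t\prec u\}\subseteq\{t:t\prec s\}$ is an initial segment.
\end{itemize}
Properties 1 and 2 now follow, since $|s|<|t|$ gives $s\prec t$, and the equal-length case is exactly the hypothesis $s\prec t$.

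Uniqueness comes from the same counting. Any $h'$ satisfying properties 1 and 2 is strictly increasing with respect to $\prec$, since $\prec$ is total and the two properties cover all comparable pairs. So $h'$ is an order isomorphism from $(\mathcal O,\prec)$ onto $(\N,<)$. An order isomorphism onto $\N$ must send $s$ to the number of its predecessors: the elements below $h'(s)$ in $\N$ are exactly $0,\dots,h'(s)-1$, and these are the images of the predecessors of $s$. Hence $h'=h$.

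The main obstacle is small. It is the care needed to get surjectivity onto all of $\N$, including $0$, which is handled by the initial-segment argument above. Transitivity of the lexicographic part is the only other computation, and it is routine.
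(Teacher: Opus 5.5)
Your proof is correct and complete. The paper does not actually prove this proposition; it defers to Dodos (pp.~109--110, ``with a bit of effort''), so there is no argument in the paper to compare with.

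Your route is the abstract one. You show that $(\mathcal O,\prec)$ is an infinite strict total order in which every element has finitely many predecessors, so it has order type $\omega$, and $h(s)=\#\{t\in\mathcal O: t\prec s\}$ is the order isomorphism onto $(\N,<)$. The point that makes uniqueness work, which you identify correctly, is that properties 1 and 2 together say exactly that $h$ is $\prec$-increasing. Hence any bijection satisfying them must count predecessors.

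A more concrete alternative is to write $h$ down. Set $h(\varnothing)=0$ and, for $s=(s_0,\dots,s_{k-2},0)\in\mathcal O$ with $k\ge 1$, set $h(s)=2^{k-1}+\sum_{j=0}^{k-2}s_j2^{k-2-j}$. That is, $h(s)$ is $2^{k-1}$ plus the binary number spelled by $s$ with its final $0$ removed. On each length level this map increases with the lexicographic order, and it sends the $2^{k-1}$ strings of length $k$ in $\mathcal O$ onto the block $\{2^{k-1},\dots,2^k-1\}$. These blocks together with $\{0\}$ partition $\N$, so existence and properties 1--2 are immediate. It also reproduces the paper's table (e.g.\ $h((010))=5$). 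It further shows that $t_{2^k-1}=(1,\dots,1,0)$ with $k-1$ ones, which is where the disjoint supports used in the proof of Lemma~\ref{ML} come from.

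Uniqueness still requires your counting argument, or the equivalent fact that an increasing bijection between two copies of $\omega$ is unique. Your abstract version has the further advantage of applying verbatim to any countable set with a total order whose initial segments are finite.
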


By setting $t_n = h^{-1}(n)$, we obtain an enumeration of the strings in $\mathcal O$. Here is a sample listing of the first few values of $h$:
\begin{align*}
&h(\varnothing) = 0 \\
& h((0)) = 1 \\
&h((00)) = 2 \qquad  h((10)) = 3 \\ 
&h((000)) = 4 \qquad h((010))=5 \qquad h((100)) = 6 \qquad h((110))=7 \qquad \\
&\ldots
\end{align*}
Now let $V_n = \{\alpha\in 2^\N\mid \alpha|_{|t_n|} = t_n\}$. This is a clopen subset of $2^\N$, and so its characteristic function $\chi_{V_n} : 2^\N\rightarrow\R$ is continuous. We set $f_{t_n} = \chi_{V_n}$. These are linearly independent, and by the Stone-Weierstrass theorem, $\overline{\spn\{f_{t_0}, f_{t_1},\ldots\}} = C(2^\N)$. Hence $(f_{t_n})$ forms a normalized Schauder basis of $C(2^\N)$. Moreover, this basis can be shown to be monotone (Claim 6.25 in \cite{D}).

Let $A$ be a countable set. A tree over $A$ is a set $T\subseteq 2^{<A}$ which is downards closed, i.e. if $t\in T$ and $s$ is a substring of $t$, then $s\in T$ as well. $T$ is said to be ill-founded if there is a sequence $\alpha = (a_n)\in A^\N$ with $\alpha|_k\in T$ for every $k\in\N$, and is said to be well-founded otherwise. Let $Tr_A = \{T\subseteq A^{\N}\mid T\text{ is a tree}\}$ and $WF_A = \{T\in Tr_A\mid T\text{ is well-founded}\}$. For a well-founded tree $T$, we can define the iterated derivatives $\{T^{(\xi)}\mid \xi<\omega_1\}$  of $T$ by the rules
\begin{align*}
&T^{(0)} = T\\
&T^{(\xi+1)} = \{t\in T\mid (\exists s\in T)[s\sqsupset t]\}\\
&T^{(\lambda)} = \bigcap_{\xi < \lambda} T^{(\xi)} \text{ when $\lambda$ is a limit ordinal}
\end{align*}
We think of the successor derivative operation as trimming the furthest leaves off of each branch of our tree, while the limit derivative operation consolidates all of these trimmings together. When $T$ is well-founded, there must exist $\xi < \omega_1$ with $T^{(\xi)} = \varnothing$; the smallest such $\xi$ is called the order of the tree and we write $o(T) = \xi$. There is also a phrasing of this operation in terms of well-founded relations, from which it follows that the order of every well-founded tree is a successor ordinal (cf. pp. 10-11 in \cite{K}).

For a Polish space $X$ and $W\subseteq X$, $W$ is called analytic (written $W\in\boldsymbol\Sigma_1^1(X)$) if $W$ is the image of $\N^\N$ under a continuous function. $W$ is called coanalytic (written $W\in\boldsymbol\Pi_1^1(X)$) if $X\setminus W$ is analytic. A function $\varphi:C\rightarrow\omega_1$ on $C\in\boldsymbol\Pi_1^1(X)$ is called a $\boldsymbol\Pi_1^1$-rank on $X$ if there are relations $\leq_\Sigma\in \boldsymbol\Sigma_1^1(X\times X), \leq_\Pi\in\boldsymbol\Pi_1^1(X\times X)$ such that for each $y\in B$,
\begin{equation*}
\{x\in B\mid \varphi(x)\leq\varphi(y)\} = \{x\in B\mid x\leq_\Sigma y\} = \{x\in B\mid x\leq_\Pi y\}
\end{equation*}

The following result is standard and can be found on any text on descriptive set theory, e.g. Theorem 27.1 in \cite{K}.

\begin{proposition}
$WF_A\in\boldsymbol\Pi_1^1(Tr_A)$, and for $T\in WF_A$, $T\mapsto o(T)$ is a $\boldsymbol\Pi_1^1$ rank on $WF_A$.
\end{proposition}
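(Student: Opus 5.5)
The plan is to prove the two assertions separately: first check the complexity of $WF_A$ by a projection argument, then exhibit the two comparison relations that make $T\mapsto o(T)$ a $\boldsymbol\Pi_1^1$-rank. The comparison relations are defined via monotone maps between trees.

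\textbf{Step 1: $WF_A$ is coanalytic.} Identify $Tr_A$ with a subset of the Cantor space $2^{A^{<\N}}$ via characteristic functions. Downward closure is a conjunction of conditions each involving two coordinates, so $Tr_A$ is closed, hence Polish. Consider
\begin{equation*}
F=\{(T,\alpha)\in Tr_A\times A^\N \mid (\forall k)\ \alpha|_k\in T\}.
\end{equation*}
This set is closed, since each condition $\alpha|_k\in T$ is clopen. The ill-founded trees are exactly the projection of $F$ onto $Tr_A$, so they form an analytic set. Therefore $WF_A$ is $\boldsymbol\Pi_1^1$.

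\textbf{Step 2: the comparison lemma.} For a well-founded $T$, define the rank function $\rho_T(t)=\sup\{\rho_T(s)+1\mid s\in T,\ s \text{ an immediate extension of } t\}$. One checks by transfinite induction that $t\in T^{(\xi)}$ iff $\rho_T(t)\ge \xi$. Consequently $o(T)=\rho_T(\varnothing)+1$ when $T\neq\varnothing$.

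Call $f:T\to S$ \emph{monotone} if $s\subsetneq t$ implies $f(s)\subsetneq f(t)$. For well-founded $T,S$, the key claim is:
\begin{equation*}
o(T)\le o(S)\iff \text{there is a monotone } f:T\to S.
\end{equation*}

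For the backward direction, induction on $\rho_T$ gives $\rho_T(t)\le\rho_S(f(t))$ for every $t\in T$.

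For the forward direction, define $f$ by recursion on the length of $t$, maintaining the invariant $\rho_S(f(t))\ge\rho_T(t)$. Start with $f(\varnothing)=\varnothing$. Suppose $f(t)$ is defined and $t'$ is an immediate extension of $t$. Then $\rho_T(t')<\rho_T(t)\le\rho_S(f(t))$. By the definition of $\rho_S$ as a supremum, some immediate extension $u$ of $f(t)$ has $\rho_S(u)\ge\rho_T(t')$, and we set $f(t')=u$.

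Similarly, $o(S)<o(T)$ iff there is a monotone $g:S\to T$ with $g(\varnothing)\neq\varnothing$. Such a $g$ is a "strict" monotone map: the backward direction gives $\rho_S(\varnothing)\le\rho_T(g(\varnothing))<\rho_T(\varnothing)$. The forward direction starts the recursion at a child of the root with large enough rank. The empty tree needs a trivial separate treatment.

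\textbf{Step 3: the relations.} Define
\begin{equation*}
T\le_\Sigma S\iff (\exists f\in (A^{<\N})^{A^{<\N}})\,[f \text{ maps } T \text{ monotonically into } S].
\end{equation*}
The bracketed condition is Borel, in fact closed, in $(T,S,f)$, so $\le_\Sigma$ is $\boldsymbol\Sigma_1^1$. Let $S<^*T$ denote the existence of a strict monotone map $S\to T$; this relation is also $\boldsymbol\Sigma_1^1$. Define
\begin{equation*}
T\le_\Pi S\iff T\in WF_A \wedge \neg(S<^* T).
\end{equation*}
This is $\boldsymbol\Pi_1^1$ by Step 1.

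Now fix $S\in WF_A$ and let $T\in WF_A$. By Step 2, $T\le_\Sigma S$ iff $o(T)\le o(S)$. Also by Step 2, $T\le_\Pi S$ iff it is not the case that $o(S)<o(T)$, i.e.\ iff $o(T)\le o(S)$. This gives the required equality of the three sets.

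\textbf{Main obstacle.} I expect the real work to lie in Step 2. The issues are matching the derivative-based definition of $o(T)$ with the rank function $\rho_T$ (including the limit stages and the $+1$ offset at the root), and carrying out the recursive construction of the monotone map so that the rank invariant is preserved.
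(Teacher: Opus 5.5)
The paper gives no proof of this proposition; it cites Kechris as standard. Your argument is correct and is essentially the standard proof found there: $WF_A$ is the complement of the projection of the closed set of pairs $(T,\alpha)$ with $\alpha$ a branch of $T$, the rank comparison is witnessed by monotone maps, and strict comparison is witnessed by maps sending $\varnothing$ to a nonempty node. Under Kechris's standard definition of a $\boldsymbol\Pi_1^1$-rank, where $x$ ranges over all of $Tr_A$ rather than only $WF_A$, you should also note that $T\not\le_\Sigma S$ for ill-founded $T$ and well-founded $S$; this holds because a monotone map would carry an infinite branch of $T$ to an infinite branch of $S$.
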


Now we discuss the construction of the trees. Let $X$ be a separable Banach space.  Fix $C\in[1,\infty)$ and a separable Banach space $Y$. Given $m\in\N$, two sequences $(x_n)_{n=0}^{m-1}\subseteq X, (y_n)_{n=0}^{m-1}\subseteq Y$ are said to be $C$-equivalent if for each sequence $(a_n)_{n=0}^{m-1}\subseteq\R$, we have
\begin{equation*}
\frac{1}{C}\bigg|\bigg|\sum_{n=0}^{m-1} a_n x_n\bigg|\bigg|\leq \bigg|\bigg|\sum_{n=0}^{m-1} a_n y_n\bigg|\bigg|\leq C\bigg|\bigg|\sum_{n=0}^{m-1} a_n x_n\bigg|\bigg|
\end{equation*}
Throughout, we will assume that the space $Y$ of interest for the above definition is $C(2^\N)$, and we will also assume that the sequence $(y_n)$ is an initial segment of the Haar basis $(f_{t_n})_{n\in\N}$ for $C(2^\N)$ defined above. Define the tree $\textbf{T}_{NC}(X,C(2^\N),(f_{t_n}),C)$ on $X$ by the rule $(x_n)_{n=0}^{m-1}\in \textbf{T}_{NC}(X,C(2^\N),C)$ if and only if $(x_n)_{n=0}^{m-1}$ is $C$-equivalent to $(f_{t_n})_{n=0}^{m-1}$. The following fact due to Bourgain is straightforward to verify (see, for instance, \cite{D}, Lemma 2.16):

\begin{theorem}\label{WFR}
$X$ does not contain a copy of $C(2^\N)$ if and only if for every $C\in [1,\infty)$, $\\ \textbf{T}_{NC}(X,C(2^\N),(f_{t_n}),C)$ is well-founded.
\end{theorem}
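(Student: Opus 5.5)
We prove both directions; the only substantive input is that a copy of $C(2^\N)$ yields branches via the Haar basis, and that an infinite branch yields a copy of $C(2^\N)$.

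For the forward direction, we argue by contraposition. Suppose that for some $C\in[1,\infty)$ the tree $\textbf{T}_{NC}(X,C(2^\N),(f_{t_n}),C)$ is ill-founded. Then there is an infinite sequence $(x_n)_{n=0}^\infty\subseteq X$ such that every initial segment $(x_n)_{n=0}^{m-1}$ is $C$-equivalent to $(f_{t_n})_{n=0}^{m-1}$. Hence for every finitely supported scalar sequence $(a_n)$,
\begin{equation*}
\frac1C\bigg|\bigg|\sum a_nx_n\bigg|\bigg|\le\bigg|\bigg|\sum a_nf_{t_n}\bigg|\bigg|_\infty\le C\bigg|\bigg|\sum a_nx_n\bigg|\bigg|.
\end{equation*}
Define $T$ on $\spn\{f_{t_n}\}$ by $T(\sum a_nf_{t_n})=\sum a_nx_n$. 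This map is well defined because the $f_{t_n}$ are linearly independent. The displayed inequality makes $T$ bounded with $\|T\|\le C$, and bounded below by $1/C$. Since $(f_{t_n})$ spans a dense subspace of $C(2^\N)$, $T$ extends by continuity to all of $C(2^\N)$, and the same two-sided estimate passes to the limit. So $T$ is an isomorphism of $C(2^\N)$ onto the closed subspace $\overline{\spn}\{x_n\}$ of $X$, and $X$ contains a copy of $C(2^\N)$.

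For the converse, suppose $T:C(2^\N)\to X$ is an isomorphic embedding with $\frac1K\|f\|\le\|Tf\|\le K\|f\|$. Set $C=K$ and $x_n=Tf_{t_n}$. For any $m$ and any scalars $(a_n)_{n<m}$, linearity of $T$ gives
\begin{equation*}
\frac1C\bigg|\bigg|\sum a_nx_n\bigg|\bigg|\le\bigg|\bigg|\sum a_nf_{t_n}\bigg|\bigg|\le C\bigg|\bigg|\sum a_nx_n\bigg|\bigg|.
\end{equation*}
Thus every initial segment of $(x_n)$ lies in the tree for this $C$, which is therefore ill-founded.

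The only point needing care is the extension step in the forward direction: the equivalence constants must not depend on $m$. That is exactly what membership in the tree with a fixed $C$ guarantees. Neither monotonicity of the basis nor any Schauder-basis property beyond density of the span and linear independence is needed.
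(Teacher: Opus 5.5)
Your proof is correct, and it is the standard argument the paper omits (the paper only calls the fact straightforward and cites Dodos, Lemma 2.16): an infinite branch with a fixed constant $C$ makes $f_{t_n}\mapsto x_n$ extend from the dense span to an isomorphic embedding of $C(2^\N)$ onto $\overline{\spn}\{x_n\}$, and conversely the images $Tf_{t_n}$ under an embedding $T$ with $\frac1K\|f\|\le\|Tf\|\le K\|f\|$ form an infinite branch of the tree with constant $K$. Your closing remark is also right: for the tree on $X$ itself, only density of the span and linear independence of the $f_{t_n}$ are used, whereas the tree on $\N$ built from the dense sequence $d_n(X)$ would additionally need a perturbation argument that uses the basis constant.
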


The theory of well-founded trees is substantially more developed for $\N$ than it is for Banach spaces, so we seek to convert this into a tree on $\N$ rather than $X$. Moreover, to apply this theory, we need this transformation to occur in a Borel manner. To do so, we utilize the universal space for separable Banach spaces. It is well known that if $Z$ is any separable Banach space, then $Z$ is isometrically isomorphic to a closed subspace of $C(2^\N)$. When endowed with the Effros Borel structure inherited from the set of all closed \textit{subsets} of $C(2^\N)$, the collection of all closed \textit{subspaces} of $C(2^\N)$ becomes a standard Borel space which contains an isometric copy of every separable Banach space. We denote this standard Borel space by SB. (This construction lies at the heart of a great number of advancements in functional analysis; more information about the basic facts can be found in Chapters 1 and 2 of \cite{D}.) By the Kuratowski-Ryll-Nardzewski selection theorem (see e.g. \cite{K}, Theorem 12.13), there are Borel functions $\{d_n : \text{SB}\setminus\{\varnothing\}\rightarrow C(2^\N)\}$ such that for every $X\in \text{SB}\setminus\{\varnothing\}$, $\{d_n(X)\}_{n\in\N}$ is dense in $X$. Using these, we define a tree $T_{NU}(X,C(2^\N),(f_{t_n}),k)$ ($k\in\N$ with $k\geq 1$) on $\N$ by the rule $(\ell_0,\ldots,\ell_{m-1})\in T_{NU}(X,C(2^\N),(f_{t_n}),k)$ if and only if $(d_{\ell_n}(X))_{n=0}^{m-1}$ is $k$-equivalent to $(f_{t_n})_{n=0}^{m-1}$. It is easily verifiable that an analogue to Theorem \ref{WFR} holds for this new countable collection of trees. Finally, to condense them into one tree, we define the tree $T_{NU}(X) = T_{NU}(X,C(2^\N), (f_{t_n}))$ on $\N$ by the rule $(k,\ell_0,\ldots, \ell_{m-1})\in T_{NU}(X)$ if and only if $(\ell_0,\ldots, \ell_{m-1})\in T_{NU}(X,C(2^\N), (f_{t_n}), k)$. Note that vacuously we have $\varnothing\in T$ and $(k)\in T$ for every $k\in\N$ with $k\geq 1$, which ensures that $T$ is genuinely a tree. This is a Borel construction in the sense that the map $X\mapsto T_{NC}(X, C(2^\N), (f_{t_n}))$ is a Borel map (following the argument presented in Lemma 2.4 in \cite{D}), and since $\text{WF}$ is a $\boldsymbol\Pi_1^1$ subset of $\text{Tr}$, we have the following result due to Bossard (recorded in \cite{D} as Theorem 2.17):

\begin{theorem}\label{WFN}
The set $\text{NU} = \{X\in\text{SB}\mid X\text{ does not contain an isomorphic copy of $C(2^\N)$}\}$ is a $\boldsymbol\Pi_1^1$ subset of SB, and the map $X\mapsto o(T_{NU}(X))$ is a $\boldsymbol\Pi_1^1$-rank on NU.
\end{theorem}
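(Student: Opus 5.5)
The plan is to realize $T_{NU}$ as a Borel map $\Phi:\text{SB}\to\text{Tr}_\N$, $\Phi(X)=T_{NU}(X)$, prove that $\text{NU}=\Phi^{-1}(\text{WF}_\N)$, and then pull back the $\boldsymbol\Pi_1^1$-rank $T\mapsto o(T)$ of the second Proposition along $\Phi$. Both conclusions then follow from the standard closure facts: preimages of $\boldsymbol\Sigma_1^1$ and $\boldsymbol\Pi_1^1$ sets under Borel maps are again $\boldsymbol\Sigma_1^1$ and $\boldsymbol\Pi_1^1$.

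\emph{Step 1: $\Phi$ is Borel.} Since $\text{Tr}_\N$ carries the topology inherited from $2^{\N^{<\N}}$, it suffices to show that for each fixed finite sequence $u=(k,\ell_0,\ldots,\ell_{m-1})$ the set $\{X\mid u\in T_{NU}(X)\}$ is Borel. The defining condition is a $k$-equivalence inequality for all real $(a_n)$. Both sides of that inequality are continuous in $(a_n)$, so it suffices to impose it for rational coefficient vectors, which gives a countable intersection. For each fixed rational $(a_n)$, the map $X\mapsto\|\sum a_n d_{\ell_n}(X)\|$ is Borel, since the $d_n$ are Borel and the norm is continuous. The quantity $\|\sum a_nf_{t_n}\|_\infty$ is a constant. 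Hence each inequality defines a Borel set, and so does their countable intersection. The empty sequence and the sequences $(k)$ are in every tree, so they cause no trouble.

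\emph{Step 2: $X\in\text{NU}$ iff $T_{NU}(X)$ is well-founded.} Note that $T_{NU}(X)$ is well-founded iff every tree $T_{NU}(X,C(2^\N),(f_{t_n}),k)$ is well-founded, because an infinite branch must have a fixed first coordinate $k$.

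For the first direction, suppose there is a branch $(k,\ell_0,\ell_1,\ldots)$. Then $(d_{\ell_n}(X))_n$ is $k$-equivalent to the Haar basis on every initial segment. The map $f_{t_n}\mapsto d_{\ell_n}(X)$ therefore extends to an isomorphism of $C(2^\N)$ onto the closed span, so $X\notin\text{NU}$.

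Conversely, suppose $T:C(2^\N)\to X$ is an isomorphism onto its image, and put $x_n=Tf_{t_n}$. Then every initial segment of $(x_n)$ is $C$-equivalent to the Haar basis, with $C=\max(\|T\|,\|T^{-1}\|)$.

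\emph{Step 3: the perturbation (the main obstacle).} We need to replace the $x_n$ by dense-sequence vectors without losing equivalence. Since the Haar basis is monotone, its coefficient functionals have norm at most $2$. So for $g=\sum a_nf_{t_n}$ we have $|a_n|\le 2\|g\|_\infty$, and hence $|a_n|\le 2C\|\sum a_nx_n\|$. Choose $\ell_n$ with $\|d_{\ell_n}(X)-x_n\|<\varepsilon_n$, where $\sum\varepsilon_n=\varepsilon$ is small. Then
\[
\Big|\,\Big\|\sum a_nd_{\ell_n}(X)\Big\|-\Big\|\sum a_nx_n\Big\|\,\Big|\le 2\varepsilon\|g\|_\infty .
\]
For $\varepsilon$ small enough relative to $C$, this makes every initial segment of $(d_{\ell_n}(X))$ $k$-equivalent to the Haar basis for any integer $k>C$ chosen with a little room. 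This produces an infinite branch $(k,\ell_0,\ell_1,\ldots)$ of $T_{NU}(X)$. This bookkeeping is the only delicate point, and it is the content of the analogue of Theorem~\ref{WFR} for the trees on $\N$.

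\emph{Conclusion.} By Steps 1 and 2, $\text{NU}=\Phi^{-1}(\text{WF}_\N)$ is $\boldsymbol\Pi_1^1$. For the rank, let $\le_\Sigma^{WF}$ and $\le_\Pi^{WF}$ witness that $o$ is a $\boldsymbol\Pi_1^1$-rank on $\text{WF}_\N$. Define $X\le_\Sigma Y$ iff $\Phi(X)\le_\Sigma^{WF}\Phi(Y)$, and $X\le_\Pi Y$ iff $\Phi(X)\le_\Pi^{WF}\Phi(Y)$. These relations are respectively $\boldsymbol\Sigma_1^1$ and $\boldsymbol\Pi_1^1$ as Borel preimages under $\Phi\times\Phi$. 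For $Y\in\text{NU}$ they cut out exactly $\{X\in\text{NU}\mid o(T_{NU}(X))\le o(T_{NU}(Y))\}$, which proves that $X\mapsto o(T_{NU}(X))$ is a $\boldsymbol\Pi_1^1$-rank on NU.
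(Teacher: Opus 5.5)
Your proposal is correct and takes the same route the paper indicates. It shows that $X\mapsto T_{NU}(X)$ is Borel, uses the analogue of Theorem~\ref{WFR} to get $\text{NU}=\Phi^{-1}(\text{WF}_\N)$, and pulls back the $\boldsymbol\Pi_1^1$-rank $T\mapsto o(T)$ along this Borel reduction; the paper only cites Dodos for these details, and your Steps 1--3, including the perturbation using $|a_n|\le 2\|g\|_\infty$ from monotonicity of the Haar basis, fill them in correctly.
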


\section{Main Result}

To prove Theorem \ref{MR}, our first step is an obvious reduction. The definitions of each of the classes of trees in the construction of $T_{NC}(H)$ all follow rules related to satisfaction of inequalities by arbitrary linear combinations of vectors. As a result, if $X$ and $Y$ are isometrically isomorphic, then $o(T_{NU}(X,(f_{t_n})) = o(T_{NU}(Y,(f_{t_n})))$. This allows us to assume without loss of generality that $H = \ell^2$. 

We start with the lower bound in the theorem. In general, one cannot say much about the order of the non-universal tree of an arbitrary separable Banach space beyond the following simple estimate, but in our case, it turns out to be sufficient.

\begin{theorem}
Let $X\in \text{NU}$ be infinite-dimensional. Then $o(T_{NU}(X))\geq\omega + 1$.
\end{theorem}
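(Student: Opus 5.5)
We need $o(T_{NU}(X))\ge \omega+1$. Since the order of a well-founded tree is a successor ordinal, it suffices to show $T_{NU}(X)^{(\omega)}\neq\varnothing$. Indeed, if $T^{(\omega)}$ were empty, then $o(T)\le\omega$; but $o(T)$ is a successor, so $o(T)=n+1$ for some finite $n$. Then $T^{(n)}$ would be nonempty while $T^{(n+1)}$ is empty, which means $T$ would have finite height bounded by $n$. It therefore suffices to show that $T_{NU}(X)$ has nodes of arbitrarily large finite length. Then $\varnothing\in T^{(m)}$ for every finite $m$, so $\varnothing\in T^{(\omega)}$, and hence $o(T)\ge\omega+1$.

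So the plan is to show that, for some fixed $k$, the tree $T_{NU}(X,C(2^\N),(f_{t_n}),k)$ contains sequences of every finite length $m$. The key input is a uniform finite-dimensional embedding of $\spn\{f_{t_0},\dots,f_{t_{m-1}}\}$. This span is a finite-dimensional subspace of $C(2^\N)$ consisting of functions that depend only on finitely many coordinates. Hence it sits isometrically inside $\ell^\infty_N$ for some $N$; concretely, take $N=2^{L}$, where $L$ exceeds the lengths of $t_0,\dots,t_{m-1}$. Since $X$ is infinite-dimensional, Dvoretzky's theorem alone gives only $\ell_2^n$, not $\ell_\infty^n$, so I would avoid needing $\ell_\infty^N$ inside $X$. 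The fixed constant I can actually achieve comes from a simpler observation: any $m$ linearly independent vectors are $C_m$-equivalent to $(f_{t_n})_{n<m}$ for some constant $C_m$, but that constant depends on $m$, and the tree $T_{NU}(X)$ fixes $k$ as the first coordinate of each node. This suggests that the uniform-constant approach is the wrong route.

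The better route uses the structure of $T_{NU}(X)$ itself, whose first coordinate $k$ ranges over all integers. I want to show that $(k)\in T^{(m)}$ for suitable $k$ depending on $m$. For each $m$, pick linearly independent $x_0,\dots,x_{m-1}\in X$, which is possible since $X$ is infinite-dimensional. By equivalence of norms on the $m$-dimensional space, the sequence $(x_n)$ is $C_m$-equivalent to $(f_{t_n})_{n<m}$ for some finite $C_m$, and the initial segments $(x_n)_{n<j}$ are also $C_m$-equivalent to $(f_{t_n})_{n<j}$. Choose an integer $k>C_m$. By density of $\{d_\ell(X)\}$, pick $\ell_n$ with $d_{\ell_n}(X)$ close enough to $x_n$ that the strict inequalities persist, using a small-perturbation argument on a finite-dimensional space. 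Then $(k,\ell_0,\dots,\ell_{m-1})\in T_{NU}(X)$, and all of its initial segments lie in the tree as well. This yields nodes of length $m+1$ for every $m$, so $\varnothing\in T^{(\omega)}$, and the theorem follows.

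The main obstacle is the perturbation step. Equivalence to $(f_{t_n})$ with constant strictly less than $k$ must be stable under small perturbations of the $x_n$. This follows because the unit sphere of $\spn\{f_{t_n}\}_{n<m}$ is compact and the relevant norms depend continuously on the coefficients. One must also check that nothing extra is required for membership in the tree: the definition demands only $k$-equivalence of the finite sequence with the initial Haar segment. Note that the hypothesis $X\in\text{NU}$ is used only to ensure that the tree is well-founded, so that $o$ is defined; it plays no role in the lower bound itself.
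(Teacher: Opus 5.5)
Your proposal is correct and follows essentially the paper's argument. For each $m$, finite-dimensional norm equivalence yields an integer $k_m$ for which $m$ linearly independent vectors are $k_m$-equivalent to $(f_{t_n})_{n<m}$; this gives nodes $(k_m,\ell_0,\ldots,\ell_{m-1})$ of unbounded length, hence $\varnothing\in T_{NU}(X)^{(\omega)}$ and $o(T_{NU}(X))\geq\omega+1$. The only real difference is minor: the paper takes a linearly independent subsequence directly from the dense sequence $(d_n(X))$, which is possible because its span is dense and hence infinite-dimensional, and so it avoids your perturbation step, though that step, using the slack $k>C_m$, is also valid.
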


\begin{proof}
Since $X$ is infinite-dimensional, $X$ must have an infinite linearly independent subset of vectors, and by density, we can find such a family using the selection functions $\{d_n\}$. Extracting to a subsequence if necessary, let $(d_n(X))_{n=0}^\infty$ be linearly independent. Fix $m$ and consider the finite sequences $(d_n(X))_{n=0}^{m-1}$ and $(f_{t_n})_{n=0}^{m-1}$. Let $T : \overline{\spn\{d_0(X),\ldots, d_{m-1}(X)\}}\rightarrow \overline{\spn\{f_{t_0},\ldots, f_{t_{m-1}}\}}$ be defined by $T(d_n(X)) = f_{t_n}$. This is a linear operator between finite dimensional spaces and hence must be continuous. Also, $T$ is invertible, so $T^{-1}$ is also continuous. Hence there exists a number $k_m$ such that for every $(a_n)_{n=0}^{m-1}\subseteq \C$, we have
\begin{equation*}
\frac{1}{k_m}\bigg|\bigg|\sum_{n=0}^{m-1} a_n d_n(X)\bigg|\bigg|\leq \bigg|\bigg|\sum_{n=0}^{m-1} a_n f_{t_n}\bigg|\bigg|\leq k_m\bigg|\bigg|\sum_{n=0}^{m-1} a_n d_n(X)\bigg|\bigg|
\end{equation*}
which implies that $(k_m, 0, \ldots, m-1)\in T_{NU}(X)$ and $o(T_{NU}(X))\geq m+1$. Since this holds for every $m$, $o(T_{NU}(X))\geq\omega$. Finally, as mentioned earlier, $o(T)$ must be a successor ordinal, and so $o(T_{NU}(X))\geq\omega + 1$.
\end{proof}

To find the corresponding upper bound, we introduce a new piece of terminology.

\begin{definition}
Let $T$ be a tree on a set $A$. An \textbf{infinite hand} in $T$ is a sequence $(s_n)_{n=0}^\infty\subseteq T$ with the following properties:
\begin{enumerate}
\item $(|s_n|)_{n=0}^\infty\subseteq\N$ is strictly monotone increasing.
\item There exists an element $s\in T$ with $s\sqsubset s_n$ for every $n$; the element of largest length which satisfies this property is called the \textbf{palm} of the infinite hand.
\end{enumerate}
\end{definition}

\begin{lemma}
Let $T$ be a well-founded tree on a set $A$ with $o(T)\geq\omega+1$. Then $o(T) = \omega+1$ if and only if the only palm of an infinite hand in $T$ is $\varnothing$.
\end{lemma}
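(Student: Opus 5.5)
The plan is to translate membership in the iterated derivatives $T^{(\xi)}$ into a statement about lengths of extensions, and then read off both directions. The basic observation is the finite-stage characterization
\begin{equation*}
t\in T^{(n)} \iff (\exists s\in T)\,[\,t\sqsubseteq s \text{ and } |s|\geq |t|+n\,],
\end{equation*}
for every $n<\omega$, which I would prove by induction on $n$ directly from the successor rule. The successor step says that $t\in T^{(n+1)}$ iff $t$ has a proper extension in $T^{(n)}$. Such an extension may be taken to be an immediate successor $t^\frown a$, since $T^{(n)}$ is downward closed. By the induction hypothesis, $t^\frown a$ then has an extension of length at least $|t|+1+n$. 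Taking the intersection over $n$ gives
\begin{equation*}
t\in T^{(\omega)} \iff t \text{ has extensions in } T \text{ of unbounded length}.
\end{equation*}
In turn this holds iff there is an infinite hand $(s_n)$ with $t\sqsubseteq s_n$ for all $n$; for the forward direction, pick extensions of strictly increasing lengths. In that situation the palm of the hand is the longest common initial segment of the $s_n$, so it extends $t$.

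Next I record what $o(T)=\omega+1$ means in these terms. Since $o(T)\geq\omega+1$, we have $\varnothing\in T^{(\omega)}$, and so $o(T)=\omega+1$ exactly when $T^{(\omega+1)}=\varnothing$. By the successor rule this says that no nonempty $t\in T$ lies in $T^{(\omega)}$. By downward closure, it is equivalent to say that no length-one node $(a)\in T$ lies in $T^{(\omega)}$.

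For the direction ($\Leftarrow$), suppose $o(T)>\omega+1$. Then some nonempty $t$ lies in $T^{(\omega)}$, and by the characterization above $t$ lies below every element of some infinite hand. The palm of that hand extends $t$ and is therefore nonempty, contradicting the hypothesis that $\varnothing$ is the only palm.

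For the direction ($\Rightarrow$), suppose some infinite hand has a nonempty palm $p$. Then $p$ is an initial segment of the elements $s_n$, whose lengths are unbounded, so $p\in T^{(\omega)}$. Hence $\varnothing\in T^{(\omega+1)}$ and $o(T)\geq\omega+2$, which is a contradiction.

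The only step needing care is the induction that identifies $T^{(n)}$ with ``has an extension at least $n$ levels longer.'' In particular one must check that the successor derivative as written (existence of a proper extension $s\sqsupset t$ inside the previous derivative) is what is iterated. One must also check that a palm exists, i.e.\ that the set of common initial segments has a longest element. This holds because that set is a chain bounded in length by $|s_0|$ and contains $\varnothing$.
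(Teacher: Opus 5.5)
Your proof is correct and follows essentially the same route as the paper. A nonempty palm lies in $T^{(\omega)}$, and conversely a nonempty node of $T^{(\omega)}$ is extended step by step down the finite derivatives to produce an infinite hand; your induction characterizing $T^{(n)}$ just makes that step explicit. Your observation that the palm only needs to extend $t$ (and so is nonempty) also lets you avoid the paper's ``without loss of generality $s$ has maximum length in $T^{\omega}$.'' That choice is unavailable once $o(T)>\omega\cdot 2$, since $T^{(\omega)}$ then contains strings of every length.
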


\begin{proof}
If $o(T) = \omega+1$ and $T$ contains an infinite hand with palm $s\neq\varnothing$, then $s\in T^\omega = \bigcap_{n\in\omega} T^n$ so that $o(T) > \omega+1$, which is a contradiction. Conversely, if $o(T) > \omega+1$, then there is an element $t\in T^{\omega+1}$ and hence an element $s\in T^\omega\setminus\{\varnothing\}$; without loss of generality, we may assume $s$ is a string that has maximum length among all strings in $T^\omega$. Fix $n$. Since $s\in T^n$, there is an extension $s_0^n$ of $s$ in $T^{n-1}$; then we can find an extension $s_1^n\in T^{n-2}$ of $s_0^n$; iterating this procedure produces a string $s_n\in T^0 = T$ with $|s_n| = |s| + n$ and $s\sqsubset s_n$. The family $(s_n)_{n=0}^\infty$ is an infinite hand in $T$ with palm $s$.
\end{proof}

Theorem \ref{MR} follows immediately from the following lemma by the definition of $T_{NU}(\ell^2, (f_{t_n}))$ as an amalgamation of the various $\textbf{T}_{NU}(\ell^2, (f_{t_n}), k)$ for $k\geq 1$.

\begin{lemma}\label{ML}
Let $C\geq 1$. There exists a number $N\in\N$ depending on $C$ such that if $t\in\textbf{T}_{NU}(\ell^2, (f_{t_n}),C)$, then $|t|\leq N$. That is, there is no infinite hand in $\textbf{T}_{NU}(\ell^2, (f_{t_n}),C)$.
\end{lemma}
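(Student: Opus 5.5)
The plan is to show that the Haar system contains, at every level, long sequences that are uniformly equivalent to the $\ell^\infty$ unit vector basis. A Hilbert space cannot contain long sequences uniformly equivalent to the $\ell^\infty^n$ basis. This incompatibility will bound the length of any element of $\textbf{T}_{NU}(\ell^2,(f_{t_n}),C)$.

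\textbf{Step 1: a disjoint block in the Haar basis.} The strings $t_n$ with $|t_n| = j$ that end in $0$ are the strings $s0$ with $|s| = j-1$. There are $2^{j-1}$ of them, and they occupy consecutive indices $n \in [2^{j-1}, 2^j)$. The corresponding sets $V_n$ are pairwise disjoint clopen cylinders, so the functions $f_{t_n}$ have disjoint supports and take values in $\{0,1\}$. Hence
\begin{equation*}
\Big\|\sum_{n=2^{j-1}}^{2^j-1} a_n f_{t_n}\Big\|_\infty = \max_n |a_n|,
\end{equation*}
so this block is isometrically equivalent to the unit vector basis of $\ell^\infty_{2^{j-1}}$.

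\textbf{Step 2: transfer to $\ell^2$.} Suppose $(x_n)_{n=0}^{m-1}\in\textbf{T}_{NU}(\ell^2,(f_{t_n}),C)$ with $m \ge 2^j$. Restricting the $C$-equivalence to coefficient sequences supported on the block of Step 1, the vectors $x_{2^{j-1}},\dots,x_{2^j-1}$ are $C$-equivalent (with constant $C^2$ overall) to the $\ell^\infty_{M}$ basis, where $M = 2^{j-1}$. In particular each $\|x_n\|\ge 1/C$, and for every choice of signs
\begin{equation*}
\Big\|\sum \varepsilon_n x_n\Big\|\le C.
\end{equation*}

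\textbf{Step 3: the parallelogram identity.} Averaging over all sign choices in a Hilbert space gives
\begin{equation*}
\mathbb E_\varepsilon\Big\|\sum \varepsilon_n x_n\Big\|^2 = \sum \|x_n\|^2 \ge M/C^2 .
\end{equation*}
Combining this with Step 2 yields $M/C^2 \le C^2$, so $M\le C^4$ and therefore $2^{j-1}\le C^4$.

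\textbf{Conclusion.} Take $j$ to be the least integer with $2^{j-1} > C^4$ and set $N = 2^j - 1$. Then every $t$ in the tree satisfies $|t| \le N$. A tree of uniformly bounded height has no strictly length-increasing sequences, so it contains no infinite hand.

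\textbf{Main obstacle.} The only real subtlety is the bookkeeping in Step 1. One must confirm that, under the enumeration $h$, the strings of a fixed length ending in $0$ form a consecutive block of indices. Only then does a long initial segment of the Haar basis contain a large disjointly supported subfamily. The key point is that a single level $j$ already yields $2^{j-1}$ disjoint cylinders. Once this is in place, the Hilbert-space estimate in Step 3 is routine.
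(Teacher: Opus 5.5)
Your proof is correct. It shares the paper's underlying idea: a disjointly supported subfamily of the first $m$ Haar functions spans $\ell^\infty_p$ isometrically, and $\ell^2$ cannot contain such spaces with uniformly bounded constants. You implement both ingredients differently, though.

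\textbf{The disjoint family.} The paper uses the ``spine'' $t_{2^k-1}=1^{k-1}0$. This supplies only $\floor{\log_2 m}$ disjoint functions among the first $m$. You take a full dyadic level $[2^{j-1},2^j)$, which supplies $2^{j-1}$ disjoint functions among the first $2^j$.

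\textbf{The Hilbert-space estimate.} The paper invokes the Tomczak-Jaegermann computation $d(\ell^2_p,\ell^\infty_p)=\sqrt{p}$. You replace this citation by averaging $\|\sum\varepsilon_n x_n\|^2$ over signs. That averaging is exactly the elementary proof of the lower bound $d(\ell^2_M,\ell^\infty_M)\geq\sqrt{M}$, which is the only part of the cited result the argument needs.

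\textbf{What each route buys.} Your route is self-contained and gives a much better constant. You get $N=2^j-1<4C^4$, polynomial in $C$, whereas the spine only yields a bound of order $2^{C^4}$. The same averaging applied to orthonormal strings gives length less than $4C^2$, which beats the paper's later orthonormal and orthogonal estimates for large $C$. You also prove the uniform bound directly, so the paper's detour through infinite hands is unnecessary. The paper's version gains only brevity, by outsourcing the Hilbert-space estimate to a known sharp theorem.
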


\begin{proof}
For notational convenience, let $\textbf{T}(C) = \textbf{T}_{NU}(\ell^2, (f_{t_n}), C)$. Suppose that $(t_i)_{i=0}^\infty$ is an infinite hand in $\textbf{T}(C)$. Write $t_i = (v_0^i,\ldots, v_{m_i-1}^i)$. Without loss of generality, we may assume $m_i = i+1$ for every $i$ (i.e. $|t_i|=i+1$ for every $i$). We will produce the quantity $N$ by looking at a specific subsequence of the $v_n^i$ vectors.

By assumption, $t_i\in \textbf{T}(C)$ for every $i$. This provides us with the inequalities
\begin{equation}\label{r1}
\frac{1}{C}\bigg|\bigg|\sum_{n=0}^{i} a_n f_{t_n}\bigg|\bigg|\leq \bigg|\bigg|\sum_{n=0}^{i} a_n v_n^i\bigg|\bigg|\leq C\bigg|\bigg|\sum_{n=0}^{i} a_n f_{t_n}\bigg|\bigg|
\end{equation}
for each sequence $(a_n)_{n=0}^i\in \R^{i+1}$. Observe that for $n\in\{2^k-1\mid k\in\N\setminus\{0\}\}$, the characteristic functions $\{f_{t_n}\}$ have disjoint support. Hence
\begin{equation*}
\bigg|\bigg|\sum_{n\in\{2^k-1\}, n\leq i} a_n f_{t_n}\bigg|\bigg|_{\infty} = \sup_n\{|a_n|\} = ||(a_n)||_\infty
\end{equation*}
For each $i$, assume the coefficients $(a_n)_{n=0}^i$ are zero unless $n = 2^k - 1$ for some $k\geq 1$. For simplicity, let us relabel the vectors $\{v_n^i\mid n = 2^k - 1, n\leq i\}$ as $(v_k^i)_{k=1}^{p_i}$, where $p_i = \floor{\log_2(i+1)}$. Also, let $\{u_k^i\}$ be an orthonormal basis for $\spn\{v_k^i\}$. There is a unitary transformation sending $u_k^i$ to $e_k$ in $\ell^2_{p_i}$, and in this reduction, the images of the $v_k^i$ are obtained via a change of basis operator $T$. Combining these together, inequality \ref{r1} becomes
\begin{equation}\label{r2}
\frac{1}{C}||(a_k)_{k=1}^{p_i}||_\infty\leq \bigg|\bigg|T\bigg(\sum_{k=1}^{p_i} a_k e_k\bigg)\bigg|\bigg|\leq C||(a_k)_{k=1}^{p_i}||_\infty
\end{equation}
This illustrates that $T$ can be regarded as an isomorphism in $B(\ell_{p_i}^2, \ell_{p_i}^\infty)$ with $||T||\leq C$ and $||T^{-1}||\leq C$, so that $||T||\cdot||T^{-1}||\leq C^2$.

We now apply a theorem recorded by Tomczak and Jaegermann (Proposition 37.6 in \cite{TJ}) regarding the Banach-Mazur distance of the spaces $\ell^2_m$ and $\ell^\infty_m$, which states that
\begin{equation*}
\inf\{||S||\cdot||S^{-1}||\mid S \in B(\ell^2_m,\ell^\infty_m) \text{ is an isomorphism}\} = \sqrt{m}
\end{equation*}
Hence we have
\begin{equation*}
\sqrt{p_i} = \inf\{||S||\cdot||S^{-1}||\mid S \in B(\ell^2_{p_i},\ell^\infty_{p_i}) \text{ is an isomorphism}\} \leq ||T||\cdot||T^{-1}|| \leq C^2
\end{equation*}
Now $p_i$ is increasing and tends to infinity as $i\rightarrow\infty$, which contradicts this inequality. Therefore inequality \ref{r1} can only be satisfied when $p_i\leq C^4$; taking $N = 2^{\ceil{C^4}-1}$ provides the stated upper bound.
\end{proof}

\section{Improved Estimates}

This section contains some stronger estimates on the lengths of the strings of vectors in $\ell^2$ under certain additional conditions. Our starting point is the map $S_m : \ell^2_m\rightarrow \spn\{f_{t_0},\ldots, f_{t_{m-1}}\}$ ($m\geq 1$) defined by $S(e_n) = f_{t_n}$. We would like to determine the operator norm of $S_m$. Observe that
\begin{equation*}
\bigg|\bigg|\sum_{n=0}^{m-1} a_n f_{t_n}\bigg|\bigg|_\infty = \sup_{\alpha\in 2^\N}\bigg\{\bigg|\sum_{n=0}^{m-1} a_n \chi_{V_n}(\alpha)\bigg|\bigg\} = \sup_{F\in\mathcal F_m}\bigg\{\bigg|\sum_{n\in F} a_n\bigg|\bigg\}
\end{equation*}
for some finite collection $\mathcal F_m\subseteq\mathcal P([0..m-1])$ depending on $m$. Estimating the operator norm of $S_m$ can therefore be done by estimating the operator norm of an operator of the form $T_\mathcal{F} : \ell^p_m\rightarrow\ell^{\infty}_{b_m}$ of the form $T_\mathcal{F}(a_0,\ldots, a_{m-1}) = (\sum_{n\in F_0} a_n, \ldots, \sum_{n\in F_{b_m}} a_n)$, where $b_m = |\mathcal F|$.

\begin{lemma}\label{ptoinf}
Let $\mathcal F\subseteq\mathcal P([0..m-1])$ be nonempty and define $T_\mathcal{F}\in B(\ell_m^p,\ell_{b_m}^\infty)$ by 
\begin{equation*}
T_\mathcal{F}(a_0,\ldots, a_{m-1}) = \bigg(\sum_{n\in F_0} a_n, \ldots, \sum_{n\in F_{b_m}} a_n\bigg)
\end{equation*} 
Let $k = \sup_{F\in\mathcal{F}}\{|F|\}$ and $q$ be the Holder conjugate of $p$. Then $||T_\mathcal{F}|| = k^\frac{1}{q}$.
\end{lemma}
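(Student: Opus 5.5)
The operator norm of $T_\mathcal{F}$ is the maximum over the coordinate functionals, so the plan is to reduce to computing the norm of a single functional on $\ell^p_m$ and then use Hölder duality. Since the target carries the $\ell^\infty$ norm,
\begin{equation*}
||T_\mathcal{F}|| = \sup_{||a||_p\leq 1}\max_{F\in\mathcal F}\bigg|\sum_{n\in F} a_n\bigg| = \max_{F\in\mathcal F}\,\sup_{||a||_p\leq 1}\bigg|\sum_{n\in F} a_n\bigg|,
\end{equation*}
where interchanging the two suprema is harmless. For a fixed $F$, the map $a\mapsto\sum_{n\in F}a_n$ is the functional given by pairing with the indicator vector $\mathbf 1_F$. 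By $\ell^p$–$\ell^q$ duality in finite dimensions, its norm is $||\mathbf 1_F||_q = |F|^{1/q}$.

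To keep the argument self-contained I would check this in two steps. For the upper bound, Hölder's inequality gives $|\sum_{n\in F}a_n|\leq ||a||_p\,||\mathbf 1_F||_q = |F|^{1/q}||a||_p$. For the lower bound, take $a_n = |F|^{-1/p}$ for $n\in F$ and $a_n=0$ otherwise. Then $||a||_p=1$ and $\sum_{n\in F}a_n = |F|^{1-1/p}=|F|^{1/q}$, so equality is attained. The edge cases need a word:
\begin{itemize}
\item For $p=1$ we have $q=\infty$ and $|F|^{1/q}=1$; the choice $a=e_n$ for any $n\in F$ attains it, provided $F\neq\varnothing$.
\item For $p=\infty$ we have $q=1$, and $a=\mathbf 1_F$ attains $|F|$.
\end{itemize}

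Taking the maximum over $F\in\mathcal F$ then gives $||T_\mathcal{F}|| = \max_F |F|^{1/q} = k^{1/q}$, because $x\mapsto x^{1/q}$ is nondecreasing. The supremum defining $k$ is a maximum since $\mathcal F$ is finite and nonempty.

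The only real subtlety is degenerate conventions. If every $F\in\mathcal F$ is empty, then $k=0$ and $T_\mathcal F=0$. The formula $0^{1/q}=0$ still holds when $q<\infty$, but for $q=\infty$ it needs the convention $0^0=0$, or the harmless assumption that some $F$ is nonempty, which always holds in the application since $f_{t_0}\equiv 1$. I would note this convention explicitly. Otherwise the argument is routine, with no step I expect to be an obstacle.
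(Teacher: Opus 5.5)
Your proof is correct and follows essentially the same route as the paper. The upper bound is Hölder's inequality $\bigl|\sum_{n\in F}a_n\bigr|\le |F|^{1/q}\|a\|_p$. Sharpness comes from testing on the indicator vector of a maximal-size $G\in\mathcal F$: the paper uses the unnormalized $\chi_G$ and you use $|G|^{-1/p}\mathbf{1}_G$, which is the same test. Your caveat about the degenerate case $k=0$, $p=1$ is a fair point the paper passes over, but it cannot arise in the application.
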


\begin{proof}
By Holder's inequality, we have
\begin{align*}
||T_\mathcal{F}((a_0,\ldots, a_{m-1}))||_\infty = \sup_{F\in\mathcal F}\bigg\{\bigg|\sum_{n\in F} a_n\bigg|\bigg\}\leq &\sup_{F\in\mathcal F}\bigg\{\sum_{n\in F}| a_n|\bigg\}\leq \sup_{F\in\mathcal F}\{|F|^\frac{1}{q}\:||a||_p\}= k^\frac{1}{q}||a||_p
\end{align*}
Sharpness of the estimate comes from choosing $G\in\mathcal F$ with $|G| = \sup_{F\in\mathcal F}\{|F|\}$ and taking $a = \chi_G\in\ell^p_m$.
\end{proof}

\begin{corollary}\label{basicvectorestimate}
Let $S_m$ be as above and $m\geq 1$. Then $||S_m|| = \sqrt{2+\floor{\log_2(m-1)}}$.
\end{corollary}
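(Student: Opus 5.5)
The plan is to apply Lemma \ref{ptoinf} with $p=q=2$ to the family $\mathcal F_m$ from the identity preceding that lemma. This gives $||S_m|| = \sqrt{k}$, where $k=\max_{F\in\mathcal F_m}|F|$. So the whole task is the combinatorial computation of $k$. Concretely, for each $\alpha\in 2^\N$ put $F_\alpha=\{n\le m-1\mid \alpha\in V_n\}$; then $\mathcal F_m=\{F_\alpha\mid\alpha\in 2^\N\}$, since $\sum_{n<m} a_n\chi_{V_n}(\alpha)=\sum_{n\in F_\alpha}a_n$. We must show $\max_\alpha |F_\alpha| = 2+\floor{\log_2(m-1)}$ for $m\geq 2$. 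For $m=1$ the expression $\log_2 0$ is undefined, so I would treat that case separately: there $S_1$ maps $e_0\mapsto f_{t_0}\equiv 1$ and has norm $1$.

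First I record the level structure of the enumeration $h$. The empty string is $t_0$, and there are $2^{L-1}$ strings of length $L\ge 1$ in $\mathcal O$. By the two properties of $h$, these occupy exactly the indices $2^{L-1},\dots,2^L-1$. Moreover, $0^L$, being $\prec$-least among them, has index $2^{L-1}$. Consequently, among the indices $0,\dots,m-1$ the longest strings present have length $L_m=\floor{\log_2(m-1)}+1$: index $m-1$ lies in the block of length $L_m$, and no index $\le m-1$ reaches the next block.

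\emph{Upper bound.} Fix $\alpha$. Here $\alpha\in V_n$ iff $t_n$ is an initial segment of $\alpha$. For each length $L$ there is only one initial segment of $\alpha$ of that length, namely $\alpha|_L$. Hence $F_\alpha$ contains at most one index whose string has length $L$, for each $L\in\{0,1,\dots,L_m\}$. This gives $|F_\alpha|\le L_m+1 = 2+\floor{\log_2(m-1)}$.

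\emph{Lower bound.} Take $\alpha = 000\cdots$. For every $L\le L_m$, the string $0^L$ lies in $\mathcal O$ and is an initial segment of $\alpha$. Its index is $2^{L-1}\le 2^{L_m-1}\le m-1$ for $L\geq 1$, and $0$ for $L=0$. So all $L_m+1$ of these indices lie in $F_\alpha$, and equality holds.

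Combining the two bounds with Lemma \ref{ptoinf} yields the corollary. The only delicate step is the bookkeeping of indices: checking that the first string of each length has index $2^{L-1}$, and that the top level present is exactly $\floor{\log_2(m-1)}+1$, including the boundary cases where $m-1$ is a power of $2$. I would verify these directly from the two defining properties of $h$.
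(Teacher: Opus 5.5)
Your proposal is correct and takes the same route as the paper. Lemma~\ref{ptoinf} with $p=q=2$ reduces the problem to the maximal number of indices $n<m$ with $t_n\sqsubseteq\alpha$, and that number is one per length $0,\dots,\floor{\log_2(m-1)}+1$. Your write-up is more careful than the paper's in three ways: the explicit witness $\alpha=000\cdots$; the choice $\mathcal F_m=\{F_\alpha\}$, which makes the norm identity exact; and the observation that $m=1$ must be handled separately because $\log_2 0$ is undefined.
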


\begin{proof}
By Lemma \ref{ptoinf}, we can determine the value of $||S_m||$ by finding $\sup_{F\in\mathcal F_m}\{|F|\}$ in terms of $m$ for a certain choice of $\mathcal F_m$. For each $a = (a_0,\ldots, a_{m-1})$, $S_m(a)$ is of the form $\sum_{n=0}^{m-1} a_n\chi_{V_n}$, and two sets $V_{n}$, $V_{n'}$ intersect if and only if one is a subset of the other by definition of the basis of $C(2^\N)$. Hence any $S_m(a)$ must be the sum over a set $F\subseteq[0..m-1]$ for which there is an $\alpha\in 2^\N$ such that $\alpha\in V_n$ for every $n\in F$, or equivalently, $t_{|F|}\sqsubseteq \alpha$ (where $t_n$ is determined by the enumeration function $h$). If we let $\mathcal F_m$ be the collection of all such $F$, then by definition of $h$, we have
\begin{equation*}
\sup_{F\in \mathcal F_m}\{|F|\} = 1+\sup\{k\mid 2^{k-1}\leq m-1\} = 2 + \floor{\log_2(m-1)}
\end{equation*}
which gives us the desired result.
\end{proof}

This result tells us that if $m > 1+2^{C^2-2}$, then $(e_0,\ldots, e_{m-1})\notin \textbf{T}(C)$, which gives a stronger bound than that of Lemma \ref{ML} when we consider the standard basis.  Unsurprisingly, since we can unitarily transform the standard basis of $\ell^2$ into any other orthonormal basis and the defining condition of $\textbf{T}(C)$ is invariant under unitary transformations, we have the following result.

\begin{corollary}\label{reorder}
Let $\textbf{T}(C)$ be defined as above and $(e_n)$ be the standard basis of $\ell^2$. Fix $m\geq 1$ and let $(u_n)_{n=0}^{m-1}$ be an orthonormal set in $\ell^2$. Then $(u_n)_{n=0}^{m-1}\in \textbf{T}(C)$ if and only if $(e_n)_{n=0}^{m-1}\in \textbf{T}(C)$. 

 In particular, every string $s\in \textbf{T}(C)$ consisting entirely of orthonormal vectors satisfies $|s|\leq 2^{C^2-2}$.
\end{corollary}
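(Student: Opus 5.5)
The plan is to transfer the defining inequalities across a unitary (hence isometric) map. Given the orthonormal set $(u_n)_{n=0}^{m-1}$, I will construct a unitary $U:\ell^2\to\ell^2$ with $U(e_n)=u_n$ for $0\le n\le m-1$. Such a $U$ exists: extend $(u_n)$ to an orthonormal basis of $\ell^2$, which is possible because the orthogonal complement of $\spn\{u_0,\ldots,u_{m-1}\}$ is infinite-dimensional and separable, hence has a countable orthonormal basis. Then send $e_n\mapsto u_n$ for $n<m$ and map the remaining standard basis vectors bijectively onto the remaining basis elements. Strictly, only the isometry of the restriction to $\spn\{e_0,\ldots,e_{m-1}\}$ is needed, and that is immediate from Parseval: $\|\sum a_n u_n\|_2=(\sum|a_n|^2)^{1/2}=\|\sum a_n e_n\|_2$.

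Using this identity for every $(a_n)_{n=0}^{m-1}\subseteq\R$, the two-sided inequality
\[
\frac1C\bigg\|\sum_{n=0}^{m-1}a_nf_{t_n}\bigg\|_\infty\le\bigg\|\sum_{n=0}^{m-1}a_nx_n\bigg\|_2\le C\bigg\|\sum_{n=0}^{m-1}a_nf_{t_n}\bigg\|_\infty
\]
holds for $x_n=u_n$ if and only if it holds for $x_n=e_n$. The middle term is literally the same number in both cases, so $C$-equivalence of $(u_n)$ with $(f_{t_n})_{n=0}^{m-1}$ is equivalent to $C$-equivalence of $(e_n)$ with it. That gives the first claim. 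One interpretive point: $\textbf{T}(C)$ is formally a tree on $\ell^2$ (the bold version), so membership is exactly this condition. If one wants the $\N$-indexed version, the statement is about vectors rather than indices, so I use the bold tree.

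For the ``in particular'' part, take a string $s=(u_0,\ldots,u_{m-1})\in\textbf{T}(C)$ of orthonormal vectors. By the first part, $(e_n)_{n=0}^{m-1}\in\textbf{T}(C)$. I then use only the upper inequality, applied with the $f$-side being at most $C$ times the $\ell^2$-side. This gives $\|S_m\|\le C$ for the map $S_m:\ell^2_m\to\spn\{f_{t_0},\ldots,f_{t_{m-1}}\}$, $e_n\mapsto f_{t_n}$. Corollary \ref{basicvectorestimate} then gives $\sqrt{2+\floor{\log_2(m-1)}}\le C$, so $\floor{\log_2(m-1)}\le C^2-2$, and hence $m-1<2^{C^2-1}$, i.e. roughly $m\le 2^{C^2-2}$ up to the boundary convention.

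I expect the main obstacle to be this final bookkeeping, since the exact constant depends on how the floor and the case $m=1$ (where $\log_2 0$ appears) are handled. I will treat $m\le 2$ separately, since there the bound is trivial for $C\ge\sqrt2$. In general I will state the bound in the form $m\le 1+2^{C^2-2}$ consistent with the remark after Corollary \ref{basicvectorestimate}, and reconcile it with the stated $2^{C^2-2}$ by checking the borderline case where $C^2-2$ is an integer.
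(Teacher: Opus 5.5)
Your first part is correct and matches the paper's argument. Orthonormality gives $\|\sum a_nu_n\|_2=\|\sum a_ne_n\|_2$, so the two $C$-equivalence conditions are literally the same.

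The gap is in the ``in particular'' part, and it is not a matter of floor conventions. From $\|S_m\|\le C$ and Corollary \ref{basicvectorestimate} you only get $\lfloor\log_2(m-1)\rfloor\le C^2-2$, i.e.\ $m\le 2^{\lfloor C^2\rfloor-1}$. This is twice $2^{C^2-2}$ whenever $C^2$ is an integer. Two concrete cases:
\begin{itemize}
\item For $C=2$ one has $\|S_8\|=\sqrt{2+\lfloor\log_2 7\rfloor}=2$, so the upper estimate allows every $m\le 8$, although the claim is $m\le 4$.
\item For $C^2=7/2$ the length $m=4$ satisfies $\|S_4\|=\sqrt3\le C$, yet exceeds even $1+2^{C^2-2}\approx 3.83$.
\end{itemize}
So the remark following Corollary \ref{basicvectorestimate}, on which both your argument and the paper's rest, does not follow from $\|S_m\|$. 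No borderline check can reconcile it.

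Moreover, for $1\le C<\sqrt2$ the second sentence is simply false. Indeed $(e_0)\in\textbf{T}(C)$, since $\|a_0e_0\|_2=|a_0|=\|a_0f_{t_0}\|_\infty$, while $2^{C^2-2}<1$. Nor is $m=2$ ``trivial'' for $C\ge\sqrt2$: when $C^2\in[2,3)$ one has $2>2^{C^2-2}$.

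The missing idea is the half of $C$-equivalence you discard, $\|\sum a_ne_n\|_2\le C\|\sum a_nf_{t_n}\|_\infty$, i.e.\ a lower bound on $\|S_m^{-1}\|$. Construct the coefficients as follows:
\begin{itemize}
\item Put $a_0=1$.
\item For $1\le n<m$ with $t_n=s0$, let $a_n=-2p(s)$, where $p(s)$ is the sum of the $a_k$ with $t_k\sqsubseteq s$. All such $k$ satisfy $k<n$, because $h$ increases with length.
\end{itemize}
Along every branch the partial sums then alternate between $1$ and $-1$. Hence $\|\sum_{n<m}a_nf_{t_n}\|_\infty=1$, while $\|a\|_2^2=1+4(m-1)=4m-3$. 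For $m=2$ this is $a=(1,-2)$, which already shows $(e_0,e_1)\notin\textbf{T}(C)$ for $C<\sqrt5$.

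Consequently $(e_n)_{n<m}\in\textbf{T}(C)$ forces $4m-3\le C^2$. This bound is polynomial in $C$ and gives $|s|\le 2^{C^2-2}$ for every $C\ge\sqrt2$. With that hypothesis added, this estimate, not the norm of $S_m$, is what proves the length bound.
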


By loosening our grasp on the uniform bound, we can obtain an analogous improved estimate for strings consisting of pairwise-orthogonal vectors (not necessarily orthonormal).

\begin{lemma}\label{boundedorthogonal}
Suppose that $(u_0,\ldots, u_{m-1})\in \textbf{T}(C)$ is a family of pairwise-orthogonal vectors. Set $A = \inf_{n<m}\{||u_n||\}$, $B = \sup_{n < m} \{||u_n||\}$, and $C' = \max\{C B, \frac{C}{A}\}$. Then $m \leq 2^{(C')^2-2}$. 

In particular, for every sting $s\in \textbf{T}(C)$ consisting of pairwise-orthogonal vectors, $|s|\leq 2^{C^4-2}$.
\end{lemma}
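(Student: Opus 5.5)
We reduce the pairwise-orthogonal case to the orthonormal case already handled by Corollary \ref{reorder} and Corollary \ref{basicvectorestimate}. Set $w_n = u_n/||u_n||$, so $(w_n)_{n=0}^{m-1}$ is orthonormal, and by the unitary invariance in Corollary \ref{reorder} we may replace $(w_n)$ by $(e_n)_{n=0}^{m-1}$. The plan is to show that $(e_n)_{n=0}^{m-1}$ lies in $\textbf{T}(C')$, i.e. is $C'$-equivalent to $(f_{t_n})_{n=0}^{m-1}$. Since $||S_m|| = \sqrt{2+\floor{\log_2(m-1)}}$, this forces $2+\floor{\log_2(m-1)}\leq (C')^2$, hence $m\leq 2^{(C')^2-2}+1$. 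I expect a little care is needed with the off-by-one; the paper's remark after Corollary \ref{basicvectorestimate} uses the same conversion, and I would follow it.

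For the key inequalities, write $a = (a_n)$ and use orthogonality: $||\sum a_n u_n||^2 = \sum a_n^2||u_n||^2$, so
\begin{equation*}
A\,||a||_2\leq \bigg|\bigg|\sum_{n<m} a_n u_n\bigg|\bigg|\leq B\,||a||_2 .
\end{equation*}
Combining this with the $C$-equivalence of $(u_n)$ and $(f_{t_n})$ gives
\begin{equation*}
||\textstyle\sum a_n f_{t_n}||_\infty\leq C\,||\textstyle\sum a_n u_n||\leq CB\,||a||_2
\end{equation*}
and
\begin{equation*}
||a||_2\leq \tfrac{1}{A}||\textstyle\sum a_n u_n||\leq \tfrac{C}{A}||\textstyle\sum a_nf_{t_n}||_\infty .
\end{equation*}
Since $||\sum a_n e_n|| = ||a||_2$, this says $(e_n)$ is $C'$-equivalent to $(f_{t_n})$. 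In fact the first inequality alone already gives $||S_m||\leq CB\leq C'$, which is all that is needed for the bound on $m$ via Corollary \ref{basicvectorestimate}.

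For the ``in particular'' clause, we must bound $C'$ by $C^2$ independently of the particular string. Use the $C$-equivalence on a single coordinate: $a = \delta_n$ gives $\frac1C\leq ||u_n||\leq C$, since $||f_{t_n}||_\infty=1$. Hence $B\leq C$ and $A\geq 1/C$, so $C'\leq C^2$ and $m\leq 2^{C^4-2}$. The main obstacle is only bookkeeping: matching the exact form $2^{(C')^2-2}$ against the floor/log expression in Corollary \ref{basicvectorestimate}. I would resolve it exactly as the paper does for the orthonormal case.
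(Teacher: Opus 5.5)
Your reduction is the paper's own: pass to $(\|u_n\|e_n)$ by unitary invariance, use $A\|a\|_2\le\|\sum a_nu_n\|\le B\|a\|_2$ to show $(e_n)_{n<m}$ is $C'$-equivalent to $(f_{t_n})_{n<m}$, and test $a=\delta_n$ to get $1/C\le\|u_n\|\le C$, hence $C'\le C^2$. Those steps are fine. The gap is the final conversion to a bound on $m$, which you defer to the paper. That does not help: the paper's conversion (the remark after Corollary \ref{basicvectorestimate} and the last sentence of Corollary \ref{reorder}, which its proof invokes with $C'$) rests on the same faulty step.

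Concretely, $2+\floor{\log_2(m-1)}\le (C')^2$ does not imply $m\le 2^{(C')^2-2}+1$, because bounding a floor does not bound its argument. What actually follows is only $m\le 2^{\floor{(C')^2}-1}$. No bookkeeping can fix this, because the upper estimate $\|S_m\|\le C'$ is genuinely too weak. For $(C')^2=3$ one has $\|S_4\|=\sqrt3\le C'$, whereas the lemma asserts $m\le 2$. Moreover, for $C'<\sqrt2$ the claimed bound is below $1$, and the statement is false as written: take $C=1$, $m=1$, $u_0=e_0$, so $C'=1$. The same failure occurs in the ``in particular'' clause when $C^4<2$.

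What repairs the argument is the half of the equivalence you explicitly discarded, namely $\|a\|_2\le C'\|\sum a_nf_{t_n}\|_\infty$. Construct the test vector as follows.
For $1\le n<m$, let $p(n)$ be the index of the longest proper initial segment of $t_n$ lying in $\mathcal O$; then $p(n)<n$ by the first property of $h$. Let $\sigma(n)$ be the number of applications of $p$ needed to reach $0$. Set $a_0=1$ and $a_n=2(-1)^{\sigma(n)}=(-1)^{\sigma(n)}-(-1)^{\sigma(p(n))}$.
Fix $\alpha\in 2^{\N}$ and let $n^*$ be the index with $t_{n^*}$ longest among those $n<m$ with $t_n\sqsubseteq\alpha$. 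Every initial segment of $t_{n^*}$ in $\mathcal O$ has smaller index, so the indices $n<m$ with $t_n\sqsubseteq\alpha$ are exactly $n^*,p(n^*),\ldots,0$. The sum therefore telescopes to $\sum_{n<m}a_nf_{t_n}(\alpha)=(-1)^{\sigma(n^*)}$.
Hence $\|\sum a_nf_{t_n}\|_\infty=1$ while $\|a\|_2^2=4m-3$, and $C'$-equivalence gives $m\le((C')^2+3)/4$. This implies $m\le 2^{(C')^2-2}$ whenever $(C')^2\ge 2$; for $(C')^2<5$ it already gives $m\le 1$. Combined with $C'\le C^2$, it yields the ``in particular'' clause whenever $C^4\ge 2$. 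In fact the resulting bound is linear rather than exponential in $(C')^2$.
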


\begin{proof}
Without loss of generality (Lemma \ref{reorder}), we may assume that $u_n = b_n e_n$ for some $b_n\in\mathbb R\setminus\{0\}$ (so that $b_n = ||u_n||$ for each $n$). Then we have the inequalities
\begin{equation*}
A\bigg|\bigg|\sum_{n=0}^{m-1} a_n e_n\bigg|\bigg| \leq \bigg|\bigg|\sum_{n=0}^{m-1} a_n b_n e_n\bigg|\bigg| \leq B\bigg|\bigg|\sum_{n=0}^{m-1} a_n e_n\bigg|\bigg|
\end{equation*}
for every $(a_n)_{n=0}^{m-1}\subseteq \R$. Combining this with the fact that $(u_0,\ldots, u_{m-1})\in \textbf{T}(C)$ by assumption, we obtain further inequalities
\begin{equation*}
\frac{1}{C B} \leq \bigg|\bigg|\sum_{n=0}^{m-1} a_n f_{t_n}\bigg|\bigg| \leq \bigg|\bigg|\sum_{n=0}^{m-1} a_n e_n\bigg|\bigg| \leq \frac{C}{A} \bigg|\bigg| \sum_{n=0}^{m-1} a_n f_{t_n}\bigg|\bigg|
\end{equation*}
which implies that
\begin{equation*}
\frac{1}{C'} \leq \bigg|\bigg|\sum_{n=0}^{m-1} a_n f_{t_n}\bigg|\bigg| \leq \bigg|\bigg|\sum_{n=0}^{m-1} a_n e_n\bigg|\bigg| \leq C' \bigg|\bigg| \sum_{n=0}^{m-1} a_n f_{t_n}\bigg|\bigg|
\end{equation*}
By Lemma \ref{reorder}, $m\leq 2^{(C')^2-2}$, giving us the desired result.

For the last statement, let $s = (u_0,\ldots, u_{m-1})\in\textbf{T}(C)$. By definition, this means that 
\begin{equation*}
\frac{1}{C} \leq \bigg|\bigg|\sum_{n=0}^{m-1} a_n f_{t_n}\bigg|\bigg| \leq \bigg|\bigg|\sum_{n=0}^{m-1} a_n u_n\bigg|\bigg| \leq C \bigg|\bigg| \sum_{n=0}^{m-1} a_n f_{t_n}\bigg|\bigg|
\end{equation*}
for every $a_0,\ldots, a_{m-1}\in \mathbb R$. Given $n$, take $a_\ell = 0$ for $\ell\neq n$ and $a_n = 1$. We obtain
\begin{equation*}
\frac{1}{C} = \frac{1}{C} \bigg|\bigg|\sum_{n=0}^{m-1} a_n f_{t_n}\bigg|\bigg| \leq \bigg|\bigg|\sum_{n=0}^{m-1} a_n u_n\bigg|\bigg| \leq C \bigg|\bigg| \sum_{n=0}^{m-1} a_n f_{t_n}\bigg|\bigg| = C
\end{equation*}
or
\begin{equation*}
\frac{1}{C}\leq ||u_n|| \leq C
\end{equation*}
Hence $\frac{1}{C}\leq A \leq B\leq C$ so that $C^2\geq \max\{C B,\frac{C}{A}\}$, which proves the claim.
\end{proof}

\section{Conclusion}

There is still much to be discovered in this direction. The most obvious natural question is to ask about the order of other spaces with respect to the basis $(f_{t_n})$. Natural choices include $\ell^p$ and $L^p([0,1])$ for $p\in(1,\infty)$. Since the duals of these spaces are separable, none of these spaces can contain $C(2^\N)$, and so their $NU$ trees (with respect to $(f_{t_n})$) must have countable order. The proposition from \cite{TJ} cited in the proof of Lemma \ref{ML} suggests that for the spaces $\ell^p$ ($p\in (1,\infty)$) we should have $o(T_{NU}(\ell^p, (f_{t_n}))) = \omega + 1$ as well, but we do not have a proof of this at the moment. 

When working with these trees, the choice of Schauder basis of $C(2^\N)$ is not typically a life or death decision, but there are some associated subtleties. In \cite{PS}, Pelczy\'{n}ski and Singer illustrated that for every Banach space $X$ with a Schauder basis, there are uncountably many normalized Schauder bases of $X$ which are not equivalent in the sense described in the Preliminaries. Argyros and Dodos proved that there is a $\boldsymbol \Pi_1^1$-rank $\varphi$ on $NU$ which satisfies $\varphi(X)\geq o(T_{NU}(X,(e_n)))$ for every $X\in NU$ and Schauder basis $(e_n)$ of $C(2^\N)$ (cf. Theorem 2.27 in \cite{D}; originally proved in \cite{AD}). This implies that $o(T_{NU}(H, (e_n)))$ is bounded above by some countable ordinal $\xi$ regardless of the choice of the basis $(e_n)$ of $C(2^\N)$. However, our work does not provide an estimate as to what this upper bound should be. In other words, it is possible that for a different choice of Schauder basis $(e_n)$ of $C(2^\N)$ which is not equivalent to $(f_{t_n})$, we may have $o(T_{NU}(H,(e_n))) > \omega + 1$. We are interested to know if this is the case.

\printbibliography

\end{document}